\newtheorem{theorem}{Theorem}[section]
\newtheorem{lemma}[theorem]{Lemma}
\newtheorem{corollary}[theorem]{Corollary}
\newtheorem{proposition}[theorem]{Proposition}
\theoremstyle{definition}
\newtheorem{remark}[theorem]{Remark}
\numberwithin{equation}{section}
\begin{document}

\title[Differential-free     characterisation of smooth mappings]{Differential-free   characterisation of smooth
mappings with controlled growth}

\author{Marijan Markovi\'{c}}

\address{Faculty of Natural Sciences and Mathematics\endgraf
University of Montenegro\endgraf
Cetinjski put b.b.\endgraf
81000 Podgorica\endgraf
Montenegro}

\email{marijanmmarkovic@gmail.com}

\begin{abstract}
In   this             paper we  give some generalizations  and improvements of the  Pavlovi\'{c} result   on the
Holland-Walsh type characterization of the Bloch space     of  continuously differentiable (smooth) functions in
the unit ball in $\mathbf{R}^m$.
\end{abstract}

\subjclass[2010]{Primary 32A18; Secondary 30D45}

\keywords{Bloch type spaces,  Lipschitz type  spaces,   Holland-Walsh      characterisation, hyperbolic distance,
analytic  function, M\"{o}bius transforms}

\maketitle

\section{Introduction and the main result}

We consider the space $\mathbf{R}^m$ equipped with the standard norm $|\zeta |$           and the scalar product
$\left<\zeta, \eta\right>$  for $\zeta\in \mathbf{R}^m$ and $\eta\in \mathbf{R}^m$.  We denote by $\mathbf{B}^m$
the    unit ball in $\mathbf{R}^m$. Let $\Omega\subseteq\mathbf{R}^m$ be a domain. For a differentiable  mapping
$f :\Omega\rightarrow \mathbf{R}^n$, denote by $Df(\zeta)$ its  differential      at  $\zeta \in \Omega$, and by
\begin{equation*}
\|Df(\zeta)\| \ = \sup _{\ell \in \partial \mathbf{B}^{m}} |Df(\zeta)\ell|
\end{equation*}
the norm of the linear  operator  $Df(\zeta) : \mathbf{R}^m\rightarrow \mathbf{R}^n$.

This paper is mainly  motivated by the      following surprising  result of    Pavlovi\'{c} \cite{PAVLOVIC.PEMS}.

\begin{proposition}[Cf. \cite{PAVLOVIC.PEMS}]\label{PR.PAVLOVIC}
A continuously differentiable  complex-valued  function  $f(\zeta)$ in  the unit ball $\mathbf{B}^m$  is a Bloch
function, i.e.,
\begin{equation*}
\sup_{\zeta\in \mathbf{B}^m}  (1-|\zeta|^2) \|Df (\zeta)\|
\end{equation*}
is finite, if and only if the following quantity if finite:
\begin{equation*}
\sup_{\zeta,\,  \eta\in \mathbf{B}^m,\, \zeta\ne \eta}\sqrt { 1-|\zeta|^2 }\sqrt{  1-|\eta|^2  }  \frac{|f(\zeta)
-f (\eta)|}{|\zeta-\eta|}.
\end{equation*}
Moreover, these  numbers are equal.
\end{proposition}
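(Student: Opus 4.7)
Write $A$ and $B$ for the first and second suprema, respectively. I will prove the two inequalities $A\le B$ and $B\le A$ separately.

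\emph{The direction $A\le B$.} Fix $\zeta\in\mathbf{B}^m$ and a direction $\ell\in\partial\mathbf{B}^m$. For $t>0$ sufficiently small, set $\eta=\zeta+t\ell$; since $f$ is $C^1$,
\[
\sqrt{(1-|\zeta|^2)(1-|\zeta+t\ell|^2)}\,\frac{|f(\zeta+t\ell)-f(\zeta)|}{t}\ \longrightarrow\ (1-|\zeta|^2)|Df(\zeta)\ell|
\]
as $t\to 0^+$. The left-hand side is dominated by $B$ for every admissible $t$, so the limit is as well; taking $\sup$ first over $\ell$ and then over $\zeta$ yields $A\le B$.

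\emph{The direction $B\le A$.} The plan is to reinterpret both sides in terms of the hyperbolic geometry of the ball. Equip $\mathbf{B}^m$ with the Riemannian metric $ds_h=|d\zeta|/(1-|\zeta|^2)$ and let $\rho_h$ denote the induced distance; a classical computation (reducing to the case $\zeta=0$ by a M\"obius automorphism of $\mathbf{B}^m$) gives the identity
\[
\sinh\rho_h(\zeta,\eta)=\frac{|\zeta-\eta|}{\sqrt{(1-|\zeta|^2)(1-|\eta|^2)}}.
\]
Assume $A<\infty$ (otherwise there is nothing to prove). For any piecewise $C^1$ curve $\gamma:[0,1]\to\mathbf{B}^m$ joining $\zeta$ to $\eta$,
\[
|f(\eta)-f(\zeta)|\le \int_0^1 \|Df(\gamma(t))\|\,|\gamma'(t)|\,dt\le A\int_0^1\frac{|\gamma'(t)|}{1-|\gamma(t)|^2}\,dt=A\,\ell_h(\gamma),
\]
where $\ell_h(\gamma)$ is the hyperbolic length of $\gamma$. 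Taking the infimum over admissible $\gamma$ yields $|f(\eta)-f(\zeta)|\le A\,\rho_h(\zeta,\eta)$, and the elementary inequality $x\le \sinh x$ for $x\ge 0$ then gives
\[
|f(\eta)-f(\zeta)|\le A\sinh\rho_h(\zeta,\eta)=A\,\frac{|\zeta-\eta|}{\sqrt{(1-|\zeta|^2)(1-|\eta|^2)}},
\]
so $B\le A$ after rearranging and taking the supremum over $\zeta\neq\eta$.

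The step I expect to be the main obstacle is establishing the hyperbolic identity for $\sinh\rho_h$; the rest is just path integration together with the one-line inequality $x\le\sinh x$. A purely elementary route would be to integrate along the Euclidean segment $\gamma(t)=(1-t)\zeta+t\eta$ and prove directly that $\sqrt{(1-|\zeta|^2)(1-|\eta|^2)}\int_0^1 dt/(1-|(1-t)\zeta+t\eta|^2)\le 1$, but that inequality is noticeably more awkward to verify by hand than the $\sinh$-trick above.
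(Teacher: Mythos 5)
Your proof is correct and follows essentially the same route as the paper: the paper deduces the proposition from its general Theorem \ref{TH.MAIN} (whose two directions are exactly your difference-quotient limit and your path-integration bound $|f(\zeta)-f(\eta)|\le A\,\rho(\zeta,\eta)$) together with the key inequality $\rho(\zeta,\eta)\sqrt{1-|\zeta|^2}\sqrt{1-|\eta|^2}\le|\zeta-\eta|$. The paper's lemma proves that inequality by a monotonicity argument for $F(t)=\tfrac12\log\tfrac{1+t}{1-t}-\tfrac{t}{\sqrt{1-t^2}}$ plus M\"obius invariance, and its closing remark explicitly notes that this is the same as your $x\le\sinh x$ argument via $\sinh^2\rho(\zeta,\eta)=|\zeta-\eta|^2/\bigl((1-|\zeta|^2)(1-|\eta|^2)\bigr)$.
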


As Pavlovi\'{c} observed in \cite{PAVLOVIC.PEMS}, the above result is actually two-dimensional. Namely, if   one
proves it for continuously  differentiable functions $\mathbf{B}^2 \rightarrow\mathbf{C}$, then the general case
(the case of  continuously  differentiable  functions   $\mathbf{B}^m  \rightarrow \mathbf{C}$)  follows from it.
We  give  a  proof of Proposition \ref{PR.PAVLOVIC}  in the next section following our main result.

Since  for  an    analytic  function   $f(z)$ in the unit  disc $\mathbf{B}^2$  we  have   $\|Df(z) \|= |f'(z)|$
for every $z\in \mathbf{B}^2$, the first part of            Proposition \ref{PR.PAVLOVIC}  (without the equality
statement) is the Holland--Walsh  characterization of analytic  functions in the  Bloch space   in the unit disc.
See Theorem 3 in  \cite{HOLLAND.WALSH.MATH.ANN} which says that  $f(z)$  is a Bloch function      if and only if
\begin{equation*}
\sqrt {1-|z|^2 }\sqrt {1-|w|^2 } \frac{|f(z) -f(w)|}{|z-w|}
\end{equation*}
is bounded    as a  function of two variables  $z\in \mathbf{B}^2$ and         $w\in \mathbf{B}^2$ for  $z\ne w$.
This  characterisation of analytic  Bloch functions in the unit ball is given   by Ren and Tu in \cite{REN.PAMS}.

Our aim here  is to  obtain  a characterisation result   similar as in Proposition \ref{PR.PAVLOVIC}          of
continuously    differentiable mappings that  satisfy a certain  growth condition.  Before we formulate our main
theorem  we need to  introduce some notation.

Let $\mathbf{w}(\zeta)$ be an everywhere positive continuous function in a domain $\Omega\subseteq \mathbf{R}^m$
(a weight  function in $\Omega$).  We will consider continuously differentiable mappings in $\Omega$ that    map
this domain into $\mathbf{R}^n$                                     and  satisfy the following  growth condition
\begin{equation*}
\|f\|_ \mathbf{w} ^\mathbf{b} := \sup_{\zeta \in \Omega} \mathbf{w}(\zeta) \|D f(\zeta)\|<\infty.
\end{equation*}

We say that  $\|f\|^\mathbf{b}_\mathbf{w}$ is  the $\mathbf{w}$-Bloch  semi-norm  of the mapping $f$ (it is easy
to check  that it has indeed  all  semi-norm properties). We   denote by $\mathcal {B}_\mathbf{w}$ the  space of
all continuously differentiable mappings                  $f:\Omega\rightarrow \mathbf{R}^n$   with  the  finite
$\mathbf{w}$-Bloch semi-norm. The space $\mathcal {B}_\mathbf{w}$ we call $\mathbf{w}$-Bloch space. If  $\Omega=
\mathbf{B}^m$ and $\mathbf{w}(\zeta)= 1-|\zeta|^2$ for $\zeta\in \mathbf{B}^m$, we just say the Bloch space, and
denote it by $\mathcal{B}$.

In  the sequel  we  will consider the $\mathbf{w}$-distance between $\zeta\in \Omega$ and $\eta\in\Omega$, which
is  obtained in the following way:
\begin{equation*}
d_\mathbf{w} (\zeta,\eta)  =  \inf_\gamma \int_\gamma \frac {|d\omega|}{\mathbf{w} (\omega)},
\end{equation*}
where  the infimun is taken over all piecewise smooth curves $\gamma\subseteq \Omega$ connecting   $\zeta$   and
$\eta$. It is well known that $d_\mathbf{w} (\zeta,\eta)$ is a distance function in the domain  $\Omega$.

One  of our  aims in this paper is to     give a  differential-free  description of the $\mathbf{w}$-Bloch space
and a differential-free expression  for  $\mathbf{w}$-Bloch    semi-norm.   In order to do that,  for    a given
$\mathbf{w} (\zeta)$                 in a domain $\Omega$, we now  introduce a new  everywhere positive function
$\mathbf{W}(\zeta,\eta)$ on the    product domain $\Omega\times \Omega$ that  satisfies the       following four
conditions. For every $\zeta\in \Omega$  and $\eta\in \Omega$,
\begin{equation*}\begin{split}
&(W_1)\quad  \mathbf{W}(\zeta,\eta)  =  \mathbf{W}(\eta,\zeta);
\\&(W_2)\quad  \mathbf{W}(\zeta,\zeta) =  \mathbf{w}  (\zeta);
\\&(W_3)\quad  \liminf_{\eta\rightarrow \zeta} \mathbf{W}(\zeta,\eta)\ge \mathbf{W}(\zeta,\zeta)
= \mathbf{w} (\zeta);
\\& (W_4)\quad  d_\mathbf{w}(\zeta,\eta)\mathbf{W}(\zeta,\eta) \le |\zeta-\eta|.
\end{split}\end{equation*}
We say that $\mathbf{W}(\zeta,\eta)$ is admissible  for $\mathbf{w} (\zeta)$.

Of course,  one can pose  the existence  question concerning  $\mathbf{W}(\zeta,\eta)$   if $\mathbf{w} (\zeta)$
is given.   In the sequel we will prove that the following functions $\mathbf{W}(\zeta,\eta)$ are admissible for
the  given functions   $\mathbf{w} (\zeta)$.
\begin{enumerate}
\item The function  \begin{equation*}
\mathbf{W}(\zeta,\eta) =
\left\{
\begin{array}{ll}
\mathbf{w}(\zeta) , & \hbox{if $\zeta=\eta$,} \\
{|\zeta - \eta|}/{  d_\mathbf{w} (\zeta,\eta)}, & \hbox{if $\zeta\ne \eta$}.
 \end{array}
\right.
\end{equation*}
in $\Omega\times \Omega$ is  admissible for  any given $\mathbf{w}(\zeta)$ in $\Omega$.
\item If $\mathbf{w} (\zeta)   = 1-|\zeta|^2$ for $\zeta\in\mathbf{B}^m$,  then    $d_\mathbf{w}(\zeta,\eta)$ is
the  hyperbolic distance  in the unit  ball $\mathbf{B}^m$. One of the   admissible  functions is
\begin{equation*}
\mathbf{W}(\zeta,\eta) = \sqrt{1-|\zeta|^2} \sqrt{1-|\eta|^2}.
\end{equation*}
This is shown  in  the next  section. From this fact we deduce  the Pavlovi\'{c}   result    stated in the above
proposition.
\item If  $\Omega$ is a  convex domain  and if $\mathbf {w}(\zeta)$ is  a decreasing function in $|\zeta|$, then
\begin{equation*}
\mathbf{W}(\zeta,\eta) =\min \{\mathbf{w}(\zeta),\mathbf{w}(\eta)\}
\end{equation*}
is admissible for  $\mathbf{w} (\zeta)$. It would be of interest to find such  simple admissible  functions  for
more  general domains  $\Omega$ and/or more general functions   $\mathbf{w}$.
\end{enumerate}

For      a                 mapping    $f:\Omega\rightarrow \mathbf{R}^n$       introduce    now  the    quantity
\begin{equation*}
\|f\|_\mathbf{W} ^\mathbf{l}\ \ \ : = \sup _{\zeta,\,  \eta\in \Omega,\,  \zeta\ne \eta} \mathbf{W}(\zeta,\eta)
\frac {|f(\zeta)-f(\eta)|}{|\zeta-\eta|}.
\end{equation*}
We call it the $\mathbf{W}$-Lipschitz semi-norm (it is also an easy task to check that it is indeed a semi-norm).
The space of all continuously differentiable mappings  $f:\Omega\rightarrow \mathbf{R}^n$ for which          its
$\mathbf{W}$-Lipschitz semi-norm   $\|f \|_{\mathbf{W}}^\mathbf{l}$ is finite  is  denoted                    by
$\mathcal {L}_\mathbf{W}$. Note that  if   $\mathbf{W}(\zeta,\eta)$  is not symmetric,      we can replace it by
$\tilde {\mathbf{W}}(\zeta,\eta)=\max \{\mathbf{W}(\zeta,\eta),\mathbf{W}(\eta,\zeta)\}$ which produces the same
Lipschitz type   semi-norm.

Our  main  result  in this paper   shows that for any continuously differentiable mapping   $f:\Omega\rightarrow
\mathbf{R}^n$  we have $\|f \| _\mathbf{w}^\mathbf{b}  = \|f\|_ \mathbf{W}^\mathbf{l}$;  i.e.,               the
$\mathbf{w}$-Bloch semi-norm is equal to the $\mathbf{W}$-Lipschitz semi-norm of the mapping  $f$.          As a
consequence we have  the coincidence of the two spaces  $\mathcal {B} _ \mathbf{w}  =  \mathcal {L} _\mathbf{W}$.
Thus,  the space $\mathcal {B}_\mathbf{w}$ may be described as
\begin{equation*}
\mathcal {B}_\mathbf{w} =\left  \{f: \Omega\rightarrow \mathbf{R}^n:
\sup _{\zeta,\,  \eta\in \Omega,\,  \zeta\ne \eta}
\mathbf{W}(\zeta,\eta) \frac  {|f(\zeta)-f(\eta)|}{|\zeta-\eta|}< \infty\right\},
\end{equation*}
where  $\mathbf{W}(\zeta,\eta)$ is any admissible function for $\mathbf{w}(\zeta)$.   This is the content of the
following theorem.

\begin{theorem}\label{TH.MAIN}
Let $\Omega\subseteq\mathbf{R}^m$ be a domain  and let $f:\Omega\rightarrow \mathbf{R}^n$      be a continuously
differentiable mapping.                 Let $\mathbf{w}(\zeta)$ be positive and continuous in $\Omega$,  and let
$\mathbf{W}(\zeta,\eta)$ be  an  admissible function for $\mathbf{w}(\zeta)$.              If one of the numbers
$\|f\|_\mathbf{w}^\mathbf{b}$ and $\|f\|_\mathbf{W}^\mathbf{l}$ is finite,  then    both  numbers are finite and
equal.
\end{theorem}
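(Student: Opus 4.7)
The plan is to establish both inequalities $\|f\|_\mathbf{W}^\mathbf{l}\le \|f\|_\mathbf{w}^\mathbf{b}$ and $\|f\|_\mathbf{w}^\mathbf{b}\le \|f\|_\mathbf{W}^\mathbf{l}$ directly. Each inequality uses exactly one of the nontrivial admissibility conditions: condition $(W_4)$ for the first, and condition $(W_3)$ for the second. Conditions $(W_1)$ and $(W_2)$ only serve to make the setup symmetric and consistent. No compactness or approximation argument is needed; the content is purely in passing between line integrals and pointwise limits.

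For the bound $\|f\|_\mathbf{W}^\mathbf{l}\le \|f\|_\mathbf{w}^\mathbf{b}$, I would pick any two distinct points $\zeta,\eta\in\Omega$ and any piecewise smooth curve $\gamma\subseteq\Omega$ joining them. Since $f$ is $C^1$, the fundamental theorem of calculus along $\gamma$ gives $|f(\zeta)-f(\eta)|\le \int_\gamma \|Df(\omega)\|\,|d\omega|$. Replacing $\|Df(\omega)\|$ by $\|f\|_\mathbf{w}^\mathbf{b}/\mathbf{w}(\omega)$ and taking the infimum over $\gamma$ yields $|f(\zeta)-f(\eta)|\le \|f\|_\mathbf{w}^\mathbf{b}\,d_\mathbf{w}(\zeta,\eta)$. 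Dividing by $|\zeta-\eta|$ and invoking $(W_4)$ in the form $d_\mathbf{w}(\zeta,\eta)/|\zeta-\eta|\le 1/\mathbf{W}(\zeta,\eta)$ gives the desired pointwise bound, which becomes the seminorm inequality after taking the supremum.

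For the reverse bound $\|f\|_\mathbf{w}^\mathbf{b}\le\|f\|_\mathbf{W}^\mathbf{l}$, fix $\zeta\in\Omega$ and a unit vector $\ell\in\partial\mathbf{B}^m$. Evaluating the Lipschitz quotient along the line through $\zeta$ in direction $\ell$ at $\eta=\zeta+t\ell$ for small $t>0$ gives
\begin{equation*}
\frac{|f(\zeta+t\ell)-f(\zeta)|}{t}\le \frac{\|f\|_\mathbf{W}^\mathbf{l}}{\mathbf{W}(\zeta,\zeta+t\ell)}.
\end{equation*}
Letting $t\to 0^+$, the left side tends to $|Df(\zeta)\ell|$ by differentiability of $f$, while condition $(W_3)$ together with $(W_2)$ forces $\liminf_{t\to 0^+}\mathbf{W}(\zeta,\zeta+t\ell)\ge \mathbf{w}(\zeta)$, so that $\limsup_{t\to 0^+}1/\mathbf{W}(\zeta,\zeta+t\ell)\le 1/\mathbf{w}(\zeta)$. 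Hence $\mathbf{w}(\zeta)|Df(\zeta)\ell|\le \|f\|_\mathbf{W}^\mathbf{l}$; taking the supremum over $\ell$ and then over $\zeta$ completes the argument.

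The proof is short and not really obstructed, but the only subtle point is the second direction, where one must carefully use $(W_3)$ as a \emph{liminf} lower bound of $\mathbf{W}(\zeta,\eta)$ at the diagonal rather than demanding actual continuity. The equality of the two seminorms then immediately yields $\mathcal{B}_\mathbf{w}=\mathcal{L}_\mathbf{W}$ and the finiteness transfer asserted in the theorem.
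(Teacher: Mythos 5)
Your proposal is correct and follows essentially the same route as the paper: the inequality $\|f\|_\mathbf{W}^\mathbf{l}\le\|f\|_\mathbf{w}^\mathbf{b}$ via the fundamental theorem of calculus along curves, the infimum defining $d_\mathbf{w}$, and condition $(W_4)$; and the reverse inequality via difference quotients at the diagonal together with $(W_3)$. The only cosmetic difference is that you recover $\|Df(\zeta)\|$ as a supremum of directional limits $|Df(\zeta)\ell|$ along segments, whereas the paper uses $\limsup_{\omega\rightarrow\zeta}|f(\zeta)-f(\omega)|/|\zeta-\omega|=\|Df(\zeta)\|$ directly; both are sound.
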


\begin{proof}
For one direction,    assume that $\mathbf{W}$-Lipschitz   semi-norm of the mapping  $f$ is finite,   i.e., that
the  quantity
\begin{equation*}
\sup _{\zeta,\, \eta\in \Omega,\, \zeta\ne \eta} \mathbf{W}(\zeta,\eta)\frac {|f(\zeta)-f(\eta)|}{|\zeta-\eta|}
\end{equation*}
is finite. We will show that  $\|f\|_\mathbf{w} ^\mathbf{b}\le \|f\|_\mathbf{W}^\mathbf{l}$, which  implies that
$\|f\|_\mathbf{w} ^\mathbf{b}$ is also  finite.

If we have  in mind that
\begin{equation*}
\limsup_{\omega\rightarrow  \zeta } \frac{|f(\zeta) - f(\omega)|}{|\zeta - \omega|} = \|Df(\zeta)\|
\end{equation*}
for every $\zeta\in \Omega$,  we obtain
\begin{equation*}\begin{split}
\|f\|_\mathbf{W}^\mathbf{l}      \ \  &
  =  \sup _{\eta,\, \omega\in \Omega,\,  \eta\ne \omega}   \mathbf{W}(\eta,\omega)
\frac {|f(\eta)-f(\omega)|}{|\eta - \omega|}\ge \limsup_{\omega \rightarrow \zeta} \mathbf{W}(\zeta,\omega)\frac
{|f(\zeta)-f(\omega)|}{|\zeta-\omega|}
\\& \ge \liminf_{\omega\rightarrow \zeta} \mathbf{W}(\zeta,\omega)
\limsup_{\omega\rightarrow \zeta}\frac {|f(\zeta)-f(\omega)|}{|\zeta-\omega|}
=  \mathbf{W}(\zeta,\zeta) \|Df(\zeta)\|\\& = \mathbf{w}(\zeta)\|Df(\zeta)\|.
\end{split}\end{equation*}
It follows that
\begin{equation*}
\|f\|_\mathbf{W}^\mathbf{l} \ge\sup_{\zeta\in \Omega}\mathbf{w}(\zeta)\|Df(\zeta)\|= \|f\|_\mathbf{w}^\mathbf{b},
\end{equation*}
which we aimed to prove.

Assume now that $\|f\|_\mathbf{w}^\mathbf{b}$ is finite.                 We  will  prove  the reverse inequality
$\|f\|_\mathbf{W}^\mathbf{l} \le\|f\|_\mathbf{w}^\mathbf{b}$.      Let $\zeta\in\Omega$ and $\eta\in \Omega$  be
arbitrary  and   different  and let  $\gamma\subseteq \Omega$  be any piecewise smooth curve parameterized by $t
\in [0,1]$ that   connects $\zeta$ and $\eta$, i.e., for which  $\gamma(0)=\zeta$  and  $\gamma(1)=\eta$.  Since
$\|f\|_\mathbf{w}^\mathbf{b}$  is finite,  we obtain
\begin{equation*}\begin{split}
|f(\zeta) - f(\eta)| &= | (f\circ \gamma) (1) - (f\circ \gamma) (0)  |
= \left|\int_0^1 ((f\circ\gamma)(t) )'dt\right|\\&
= \left|\int_0^1  Df ( \gamma (t) ) \gamma'(t) dt\right|
\le\int_0^1 \left| Df ( \gamma (t) ) \gamma'(t) \right|dt
\\& \le \int_0^1 \|Df(\gamma (t))\| |\gamma'(t)|dt
\le\|f\|_\mathbf{w}^\mathbf{b} \int_0^1  \frac {|\gamma'(t)|dt}{\mathbf{w} (\gamma(t))}
\\&=\|f\|_\mathbf{w}^\mathbf{b} \int_\gamma \frac {|d\omega|}{\mathbf{w} (\omega)}.
\end{split}\end{equation*}
If we take infimum over all  such    curves  $\gamma$   we obtain
\begin{equation*}\begin{split}
|f(\zeta) - f(\eta )|\le \|f\|_\mathbf{w}^\mathbf{b} d_{\mathbf{w}} (\zeta,\eta).
\end{split}\end{equation*}
Because of our conditions    posed on the function                           $\mathbf{W}(\zeta,\eta)$,   we have
\begin{equation*}
\mathbf{W}(\zeta,\eta)\frac {|f(\zeta) -  f(\eta)|}{|\zeta-\eta|}   \le
\mathbf{W}(\zeta,\eta)\frac {d_\mathbf{w}(\zeta,\eta)}{|\zeta-\eta|} \|f\|_\mathbf{w} ^ \mathbf{b} \le
\|f\|_\mathbf{w}^\mathbf{b}.
\end{equation*}
Therefore,
\begin{equation*}
\|f\|_{\mathbf{W}}^ \mathbf{l} \ \ \ =\sup_{\zeta,\, \eta\in \Omega,\,  \zeta\ne\eta} \mathbf{W}(\zeta,\eta)
\frac {|f(\zeta) -  f(\eta)|}{|\zeta-\eta|} \le \|f\|_\mathbf{w}^\mathbf{b},
\end{equation*}
which we wanted to prove.
\end{proof}

\begin{remark}
Let $\mathbf{w}(\zeta)$ be a weight in a domain $\Omega\subseteq \mathbf{R}^m$.         Observe   that   we have
\begin{equation*}
\sup_{\zeta\in \Omega} \mathbf{w}(\zeta)\ \  \ = \sup_{\zeta,\,  \eta\in \Omega,\, \zeta\ne\eta}
\mathbf{W}(\zeta,\eta),
\end{equation*}
where $\mathbf{W}(\zeta,\eta)$ is admissible for $\mathbf{w}(\zeta)$. This remark    is a  direct consequence of
the fact that   we can set the identity  $f(\zeta) = \mathrm {Id}(\zeta)$ in Theorem \ref{TH.MAIN}.
\end{remark}

\section{On the Pavlovi\'{c} result}

As we have already  said,       if we take $\mathbf{w} (\zeta)= 1-|\zeta|^2$ for $\zeta\in \mathbf{B}^m$,   then
$\mathbf{w}$-distance is the hyperbolic distance. For the hyperbolic distance between    $\zeta\in \mathbf{B}^m$
and $\eta\in \mathbf{B}^m$  we will use the usual  notation $\rho(\zeta,\eta)$.

It is well known that the hyperbolic  distance  is invariant  under M\"{o}bius transforms of the unit ball; i.e.,
if $T: \mathbf{B}^m \rightarrow \mathbf{B}^m$ is  a  M\"{o}bius   transform,  then we  have
\begin{equation*}
\rho ( T (\zeta),T(\eta)) = \rho (\zeta,\eta)
\end{equation*}
for every $\zeta\in \mathbf{B}^m$ and $\eta\in \mathbf{B}^m$.

Up to an   orthogonal  transform, a   M\"{o}bius  transform of  the unit ball $\mathbf {B}^m$ onto itself can be
represented as
\begin{equation*}
T_\zeta  (\eta ) = \frac { - (1-|\zeta|^2)(\zeta- \eta) - |\zeta- \eta|^2 \zeta}{[\zeta,\eta]^2},  \quad \eta\in
\mathbf{B}^m
\end{equation*}
for $\zeta\in \mathbf{B}^m$, where
\begin{equation*}
[\zeta, \eta]^2 = 1-2\left<\zeta,\eta\right> + |\zeta|^2|\eta|^2.
\end{equation*}

It is known that
\begin{equation*}
|T_\zeta \eta  |  =\frac {|\zeta-\eta|}{[\zeta,\eta]}\quad \text{and}\quad
1 - |T_\zeta \eta  |^2  =\frac {(1-|\zeta|^2) (1-|\eta|^2)}{[\zeta,\eta]^2}
\end{equation*}
for every $\zeta\in \mathbf{B}^m$ and $\eta\in\mathbf{B}^m$.

Particularly,  one easily  calculates
\begin{equation*}
\rho(0,\omega) = \frac 12\log \frac{1+ |\omega|}{1 - |\omega|}
\end{equation*}
for $\omega\in\mathbf{B}^m$. Because of the invariance with respect to the group of M\"{o}bius transforms of the
unit    ball, the hyperbolic distance between  $\zeta\in \mathbf{B}^m$ and $\eta\in\mathbf{B}^m$          can be
expressed as
\begin{equation*}
\rho(\zeta,\eta)=  \frac 12\log \frac{1+ |T_\zeta \eta|}{1- |T_\zeta \eta|} =   \mathrm{atanh}\, |T_\zeta(\eta)|.
\end{equation*}

For all mentioned facts and identities above we refer the reader to Ahlfors \cite{AHLFORS.BOOK} or      Vuorinen
\cite{VUORINEN.BOOK}.

Proposition    \ref{PR.PAVLOVIC}    can be seen as a  consequence of our main result and the following elementary
lemma which proves that $\mathbf{W}(\zeta,\eta)= \sqrt {1-|\zeta|^2} \sqrt {1-|\eta|^2}$ has  $W_4$-property, and
therefore     it  is  admissible  for $\mathbf{w}(\zeta) = 1-|\zeta|^2$.

\begin{lemma}
The function  $\mathbf{W}(\zeta,\eta)= \sqrt {1-|\zeta|^2} \sqrt {1-|\eta|^2}$         satisfies  the  inequality
\begin{equation*}
\rho(\zeta,\eta)  \mathbf{W}(\zeta,\eta)\le |\zeta - \eta|
\end{equation*}
for every  $\zeta\in \mathbf{B}^m$ and $\eta\in\mathbf{B}^m $.
\end{lemma}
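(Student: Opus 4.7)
The plan is to exploit Möbius invariance to reduce the inequality to a one-dimensional statement about the inverse hyperbolic tangent, which then follows from the elementary bound $\sinh(x)\ge x$ for $x\ge 0$.

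First I would use the Möbius transform $T_\zeta$ to normalise $\zeta$ to $0$. The left-hand side is not literally invariant under $T_\zeta$, but the identities already collected in this section show that it transforms controllably: if I set $r=|T_\zeta\eta|$, then the displayed formulas give
\begin{equation*}
|\zeta-\eta| = r\,[\zeta,\eta], \qquad \sqrt{1-|\zeta|^2}\sqrt{1-|\eta|^2} = [\zeta,\eta]\sqrt{1-r^2},
\end{equation*}
while the hyperbolic distance is genuinely invariant, so $\rho(\zeta,\eta)=\rho(0,T_\zeta\eta)=\mathrm{atanh}\,r$. Substituting and cancelling the common factor $[\zeta,\eta]>0$, the inequality to prove collapses to
\begin{equation*}
\sqrt{1-r^2}\,\mathrm{atanh}\,r \le r, \qquad r\in[0,1).
\end{equation*}

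Second, I would verify this scalar inequality. The cleanest way is to set $x=\mathrm{atanh}\,r\ge 0$, so that $r=\tanh x$ and $\sqrt{1-r^2}=1/\cosh x$. The inequality then becomes $x/\cosh x\le \tanh x$, i.e.\ $x\le \sinh x$, which is immediate from the Taylor expansion $\sinh x = x + x^3/6+\cdots$ (or from the fact that $\sinh x - x$ vanishes at $0$ and has non-negative derivative $\cosh x-1$).

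The only potential obstacle is the reduction step, which requires keeping careful track of how each factor on the left scales under the Möbius transform; but since the paper has already recorded all the relevant identities for $|T_\zeta\eta|$, $1-|T_\zeta\eta|^2$, and $\rho$, nothing more than substitution is needed. The remaining scalar inequality is a standard elementary estimate, so I do not anticipate any genuine difficulty.
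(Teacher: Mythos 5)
Your proof is correct and follows essentially the same route as the paper: reduce via the M\"{o}bius identities for $|T_\zeta\eta|$, $1-|T_\zeta\eta|^2$, and the invariance of $\rho$ to the one-variable inequality $\sqrt{1-r^2}\,\mathrm{atanh}\,r\le r$, and then verify that scalar bound. The only (harmless) difference is in the last step, where the paper shows the difference of the two sides is decreasing by differentiation, while your substitution $r=\tanh x$ reducing it to $x\le\sinh x$ is exactly the simplification the paper's subsequent remark attributes to the referee.
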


\begin{proof}
We  will   first establish  the following special case of the inequality we need:
\begin{equation*}
\rho(0,\omega){\sqrt {1-|\omega|^2}} \le   |\omega|
\end{equation*}
for $\omega\in\mathbf{B}^m$.

Since
\begin{equation*}
\rho(0,\omega) = \frac 12\log \frac{1+ |\omega|}{1 - |\omega|},
\end{equation*}
if we take $t =|\omega|$, the above  inequality is equivalent to the  following  one:
\begin{equation*}
\frac 12 \log\frac{1+t}{1-t} \le \frac {t}{ \sqrt {1-t^2}  },
\end{equation*}
where  $0\le t<1$.       Denote the difference of the  left-hand side minus the right-hand side by $F(t)$.   Then
we have
\begin{equation*}
F'(t) = -\frac {1}{(1 - t^2)^{3/2}} + \frac 1{1 - t^2}, \quad 0<t<1.
\end{equation*}
Since $F'(t)< 0$ for $0<t<1$,  it follows that $F(t)$ is a decreasing function in $[0,1)$.              Therefore,
$F(t)\le F(0) = 0$, which implies the inequality  we aimed to prove.

It the inequality we have just proved, let us take $\omega=T_\zeta \eta$, where $\zeta\in\mathbf{B}^m$ and  $\eta
\in \mathbf{B}^m$  are arbitrary.  Then we have
\begin{equation*}
\rho(0,\omega) = \rho(T_\zeta \zeta, T_\zeta\eta ) = \rho(\zeta,\eta),
\end{equation*}
\begin{equation*}
\sqrt{1-|\omega|^2} = \sqrt{1-|T_\zeta \eta |^2} = \frac {\sqrt{1-|\zeta|^2} \sqrt{1-|\eta|^2}}{[\zeta,\eta]},
\end{equation*}
as well as
\begin{equation*}
|\omega| = |T_\zeta \eta | = \frac{|\zeta-\eta|}{[\zeta,\eta]}.
\end{equation*}
If we substitute all above expressions,                   we obtain the inequality in the statement  of our lemma.
\end{proof}

\begin{remark}
One more   expression for the hyperbolic distance in the unit ball is given by
\begin{equation*}
\sinh^2 \rho(\zeta,\eta)  = \frac{|\zeta - \eta |^2}{(1-|\zeta|^2)(1-|\eta|^2)}
\end{equation*}
(see \cite{VUORINEN.BOOK}).  Using the elementary inequality $t\le  \sinh   t$, as suggested by the referee,  one
deduces  the inequality in the above lemma.
\end{remark}

\section{Some other consequences of the main theorem}

In this section we will derive some new consequences of our main result.

\begin{corollary}\label{CORO.MIN}
Let $\mathbf{w}(\zeta)$ be an everywhere positive, continuous and decreasing function of $|\zeta|$    in a convex
domain  $\Omega\subseteq \mathbf{R}^m$. Then we have
\begin{equation*}
\sup_{\zeta \in \Omega} \mathbf{w}(\zeta)\|Df(\zeta)\| \ \ \ =
\sup_{\zeta,\, \eta\in \Omega,\, \zeta\ne\eta}  \min \{\mathbf{w} (\zeta),\mathbf{w}(\eta)\}
\frac {|f(\zeta) - f(\eta)|}{|\zeta - \eta|}
\end{equation*}
for every continuously differentiable mapping $f:\Omega\rightarrow \mathbf{R}^n$.
\end{corollary}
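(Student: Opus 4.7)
The plan is simply to verify that the function $\mathbf{W}(\zeta,\eta)=\min\{\mathbf{w}(\zeta),\mathbf{w}(\eta)\}$ is admissible for $\mathbf{w}(\zeta)$ under the stated hypotheses, and then to invoke Theorem \ref{TH.MAIN}. Conditions $(W_1)$ (symmetry) and $(W_2)$ (diagonal value) follow at once from the definition of $\min$, and $(W_3)$ is immediate from the continuity of $\mathbf{w}$, since $\mathbf{w}(\eta)\to\mathbf{w}(\zeta)$ as $\eta\to\zeta$ forces $\min\{\mathbf{w}(\zeta),\mathbf{w}(\eta)\}\to\mathbf{w}(\zeta)$.

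The real content is verifying $(W_4)$, namely
\begin{equation*}
d_\mathbf{w}(\zeta,\eta)\,\min\{\mathbf{w}(\zeta),\mathbf{w}(\eta)\}\le|\zeta-\eta|.
\end{equation*}
Here I would exploit both hypotheses: convexity of $\Omega$ allows us to use the straight-line segment $\gamma(t)=(1-t)\zeta+t\eta$, $t\in[0,1]$, as a competitor in the infimum defining $d_\mathbf{w}$; monotonicity of $\mathbf{w}$ in $|\zeta|$ will bound $\mathbf{w}$ from below along this segment.

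The key observation is that by the triangle inequality for the norm, $|\gamma(t)|\le(1-t)|\zeta|+t|\eta|\le\max\{|\zeta|,|\eta|\}$ for all $t\in[0,1]$. Writing $\mathbf{w}(\zeta)=\mathbf{w}_0(|\zeta|)$ for a decreasing function $\mathbf{w}_0$, this gives
\begin{equation*}
\mathbf{w}(\gamma(t))=\mathbf{w}_0(|\gamma(t)|)\ge\mathbf{w}_0\bigl(\max\{|\zeta|,|\eta|\}\bigr)=\min\{\mathbf{w}(\zeta),\mathbf{w}(\eta)\},
\end{equation*}
where the last equality again uses that $\mathbf{w}_0$ is decreasing. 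Consequently,
\begin{equation*}
d_\mathbf{w}(\zeta,\eta)\le\int_0^1\frac{|\gamma'(t)|\,dt}{\mathbf{w}(\gamma(t))}\le\frac{|\eta-\zeta|}{\min\{\mathbf{w}(\zeta),\mathbf{w}(\eta)\}},
\end{equation*}
which is $(W_4)$.

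With admissibility established, Theorem \ref{TH.MAIN} applied to $\mathbf{W}(\zeta,\eta)=\min\{\mathbf{w}(\zeta),\mathbf{w}(\eta)\}$ yields $\|f\|_\mathbf{w}^\mathbf{b}=\|f\|_\mathbf{W}^\mathbf{l}$, which is exactly the claimed identity of suprema. The main (indeed only) obstacle is the line-segment bound $(W_4)$, and it hinges precisely on the two hypotheses (convexity of $\Omega$ and radial monotonicity of $\mathbf{w}$); weakening either would require a different construction of the competing curve $\gamma$.
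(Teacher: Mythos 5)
Your proposal is correct and follows essentially the same route as the paper: verify $(W_1)$--$(W_3)$ directly, and establish $(W_4)$ by taking the straight-line segment as a competitor in the infimum and using the fact that $|\gamma(t)|\le\max\{|\zeta|,|\eta|\}$ together with the radial monotonicity of $\mathbf{w}$ to bound $\mathbf{w}(\gamma(t))$ from below by $\min\{\mathbf{w}(\zeta),\mathbf{w}(\eta)\}$. Your write-up is in fact slightly more explicit than the paper's, which merely remarks that the maximum of $|\omega|$ on a segment is attained at an endpoint.
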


\begin{proof}
Let
\begin{equation*}
\mathbf{W}(\zeta,\eta)   = \min \{\mathbf{w} (\zeta), \mathbf{w} (\eta)\},
\end{equation*}
for $(\zeta,\eta)\in\Omega\times \Omega$. We have only  to check if $\mathbf{W}(\zeta,\eta)$ satisfies conditions
$(W_1) -   (W_4)$ and to  apply our  main theorem.

It is clear that $\mathbf{W}(\zeta,\eta)$   is symmetric, and that $\mathbf{W}(\zeta,\zeta) = \mathbf{w} (\zeta)$.
Since $\mathbf{W}(\zeta,\eta)$ is continuous  in                 $\Omega\times \Omega$, the $(W_3)$-condition for
$\mathbf{W}(\zeta,\eta)$ obviously  holds.   Therefore, it remains  to check if the following  inequality is true:
\begin{equation*}
d_\mathbf{w}(\zeta,\eta)\min \{\mathbf{w} (\zeta), \mathbf{w} (\eta)\}\le |\zeta-\eta|
\end{equation*}
for every $(\zeta,\eta)\in  \Omega\times \Omega$.

Let  $\zeta\in \Omega$  and  $\eta\in \Omega$ be arbitrary and fixed   and let  $\gamma\subseteq \Omega$ be among
piecewise smooth  curves  that  joint   $\zeta$ and $\eta$.  We have
\begin{equation*}\begin{split}
d_\mathbf{w} (\zeta,\eta) & = \inf _\gamma \int_{\gamma} \frac {|d\omega |} {\mathbf{w}(\omega)}
\le\int_{[\zeta,\eta]}\frac {|d\omega|} {\mathbf{w}(\omega)}
\le \int_{[\zeta,\eta]} \max_{\omega \in [\zeta,\eta]}\left\{ \frac {1} {\mathbf{w}(\omega)} \right\} {|d\omega|}
\\&\le\max  \left\{\frac 1{\mathbf{w}(\zeta)},\frac 1{\mathbf{w}(\eta)}  \right\} \int_{[\zeta,\eta]} {|d\omega|}
= \max \left\{\frac 1{\mathbf{w}(\zeta)},\frac 1{\mathbf{w}(\eta)} \right\} |\zeta - \eta|
\\&= \min \{\mathbf{w} (\zeta), \mathbf{w} (\eta)\}^{-1}|\zeta - \eta|,
\end{split}\end{equation*}
where we     have used in the fourth step our assumption that  $\mathbf{w}(\omega)$ is decreasing in  $|\omega|$
and that    the maximum modulus of points on a line segment is attained at an endpoint. The   inequality we need
follows.
\end{proof}

\begin{remark}
Since the function $\mathbf{w} (\zeta)=1-|\zeta|^2$ is decreasing in $|\zeta|$ in the unit ball   $\mathbf{B}^m$,
the  above corollary produces a  new Holland-Walsh   type  characterisation of continuously differentiable Bloch
mappings. Notice that $\min\{A,B\}\le  \sqrt{A}\sqrt{B}$ for all non-negative numbers $A$ and $B$.    Because of
this inequality, it seems that Corollary \ref{CORO.MIN} improves the Pavlovi\'{c} result stated at the beginning
of the paper as  Proposition  \ref{PR.PAVLOVIC}.
\end{remark}

\begin{corollary}\label{CORO.D.RHO}
Let $\mathbf{w}(\zeta) $        be an everywhere positive and continuous  function  in a domain $\Omega$ and let
$d_\mathbf{w}(\zeta,\eta)$ be the $\mathbf{w}$-distance in  $\Omega$.  Then we have
\begin{equation*}
\sup_{\zeta\in \Omega} \mathbf{w}(\zeta) \|Df(\zeta)\|  \ \ \ = \sup_{\zeta,\, \eta\in\Omega,\,   \zeta\ne \eta}
\frac{|f(\zeta) - f(\eta)|}{d_\mathbf{w}(\zeta,\eta)}
\end{equation*}
for any continuously  differentiable  mappings  $f:\Omega\rightarrow \mathbf{R}^n$.
\end{corollary}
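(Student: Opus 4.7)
The plan is to apply Theorem \ref{TH.MAIN} with the admissible function of type (1) announced in the introduction, namely
\begin{equation*}
\mathbf{W}(\zeta,\eta) = \begin{cases} \mathbf{w}(\zeta), & \text{if } \zeta = \eta, \\ |\zeta - \eta|/d_\mathbf{w}(\zeta,\eta), & \text{if } \zeta \ne \eta. \end{cases}
\end{equation*}
Once this $\mathbf{W}$ is shown to be admissible for $\mathbf{w}$, Theorem \ref{TH.MAIN} yields $\|f\|_\mathbf{w}^\mathbf{b} = \|f\|_\mathbf{W}^\mathbf{l}$, and by construction
\begin{equation*}
\mathbf{W}(\zeta,\eta)\frac{|f(\zeta) - f(\eta)|}{|\zeta-\eta|} = \frac{|f(\zeta) - f(\eta)|}{d_\mathbf{w}(\zeta,\eta)}
\end{equation*}
for $\zeta \ne \eta$, which is precisely the quantity whose supremum appears on the right-hand side of the corollary. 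So the whole task reduces to verifying the four admissibility conditions $(W_1)$--$(W_4)$.

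Conditions $(W_1)$ and $(W_2)$ are immediate from the symmetry of $d_\mathbf{w}$ and the definition of $\mathbf{W}$ on the diagonal. Condition $(W_4)$ holds with equality: for $\zeta \ne \eta$ the product $d_\mathbf{w}(\zeta,\eta)\mathbf{W}(\zeta,\eta)$ equals $|\zeta-\eta|$ by the very definition, and for $\zeta = \eta$ both sides vanish. All the genuine content therefore sits in $(W_3)$.

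To verify $(W_3)$, I would fix $\zeta \in \Omega$ and $\varepsilon > 0$, choose $\delta > 0$ so that the open ball $B(\zeta,\delta)$ is contained in $\Omega$ and $\mathbf{w}(\omega) \ge \mathbf{w}(\zeta) - \varepsilon$ throughout it (possible by continuity and positivity of $\mathbf{w}$), and then bound $d_\mathbf{w}$ from above by the path integral along the straight segment $[\zeta,\eta] \subset B(\zeta,\delta)$, which is an admissible competitor in the infimum defining $d_\mathbf{w}$:
\begin{equation*}
d_\mathbf{w}(\zeta,\eta) \le \int_{[\zeta,\eta]} \frac{|d\omega|}{\mathbf{w}(\omega)} \le \frac{|\zeta-\eta|}{\mathbf{w}(\zeta)-\varepsilon}.
\end{equation*}
This gives $\mathbf{W}(\zeta,\eta) \ge \mathbf{w}(\zeta) - \varepsilon$ for $\eta \in B(\zeta,\delta) \setminus \{\zeta\}$, hence $\liminf_{\eta\to\zeta}\mathbf{W}(\zeta,\eta) \ge \mathbf{w}(\zeta)$ after letting $\varepsilon \to 0$.

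The main (indeed only) subtle point is this $(W_3)$ verification. The crux is that even when $\Omega$ is not convex, one can restrict to a small enough ball about $\zeta$ lying inside $\Omega$ so that the straight segment to $\eta$ is available as a competitor in the infimum defining $d_\mathbf{w}$, and then use the continuity of $\mathbf{w}$ to control the integrand uniformly. Everything else is just an unwinding of the definitions plus the application of Theorem \ref{TH.MAIN}.
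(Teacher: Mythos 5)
Your proposal is correct and follows essentially the same route as the paper: it uses the same admissible function $\mathbf{W}(\zeta,\eta)=|\zeta-\eta|/d_\mathbf{w}(\zeta,\eta)$ (with $\mathbf{W}(\zeta,\zeta)=\mathbf{w}(\zeta)$), observes that $(W_1)$, $(W_2)$, $(W_4)$ are immediate, and verifies $(W_3)$ by comparing $d_\mathbf{w}$ with the integral over the straight segment in a small ball about $\zeta$ and invoking the continuity of $\mathbf{w}$. Your explicit $\varepsilon$--$\delta$ bookkeeping is just a slightly more detailed rendering of the paper's limit computation.
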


\begin{proof}
For $\zeta\in \Omega$ and $\eta\in \Omega$ let
\begin{equation*}
\mathbf{W}(\zeta,\eta) =
\left\{
\begin{array}{ll}
\mathbf{w}(\zeta) , & \hbox{if $\zeta=\eta$,} \\
{|\zeta - \eta|}/{  d_\mathbf{w} (\zeta,\eta)}, & \hbox{if $\zeta\ne \eta$}.
 \end{array}
\right.
\end{equation*}
It  is enough to show that $\mathbf{W}(\zeta,\eta)$ is admissible for $\mathbf{w}(\zeta)$.      It is clear that
$\mathbf{W}(\zeta,\eta)$  is        symmetric. The $(W_4)$-condition for  $\mathbf{W}(\zeta,\eta)$  is obviously
satisfied, and here it is optimal in some sense. Therefore, we have  only to  check if  $\mathbf{W}(\zeta,\eta)$
satisfies the $(W_3)$-condition:
\begin{equation*}
\liminf _{\eta\rightarrow \zeta} \mathbf{W}(\zeta,\eta)\ge\mathbf{W} (\zeta,\zeta).
\end{equation*}
This means that we need  to show that
\begin{equation*}
\liminf _{\eta\rightarrow \zeta}\frac{|\zeta - \eta|}{  d_\mathbf{w}(\zeta,\eta)} \ge \mathbf{w}(\zeta).
\end{equation*}
If we invert both sides, we obtain  that we have to prove
\begin{equation*}
\limsup _{\eta\rightarrow \zeta}  \frac {d_{\mathbf{w} }(\zeta,\eta)}{ |\zeta -\eta |}  \le
  \frac 1 {\mathbf{w}(\zeta)}.
\end{equation*}
for every $\zeta\in\Omega$.

Since this is a local question,  we may assume that $\eta $ is in a convex neighborhood of $\zeta$. Let $\gamma$
be among piecewise smooth curves in $\Omega$ connecting   $\zeta$ and $\eta$. We have
\begin{equation*}\begin{split}
\limsup_{\eta\rightarrow\zeta}\frac 1{|\zeta-\eta|}\inf_{\gamma}\int_\gamma\frac {|d\omega|}{\mathbf{w}(\omega)}
&\le\limsup_{\eta\rightarrow\zeta}\frac 1{|\zeta-\eta|}\int_{[\zeta,\eta]} \frac {|d\omega|}{\mathbf{w}(\omega)}
\\&= \lim_{\eta\rightarrow \zeta} \frac 1{|\zeta-\eta|} \int_{[\zeta,\eta]}\frac {|d\omega|}{\mathbf{w}(\omega)}
= \frac 1{\mathbf{w} (\zeta)},
\end{split}\end{equation*}
which we wanted to prove. The  equalities above follow because of continuity of the function $\mathbf{w}(\zeta)$.
\end{proof}

\begin{remark}
In the case $\mathbf{w}(\zeta)=(1-|\zeta|^2)^\alpha$ for $\zeta\in\mathbf{B}^2$, where  $\alpha>0$ is a constant,
Corollary  \ref{CORO.D.RHO}     is proved by Zhu  in \cite{ZHU.RMJM} for  analytic  functions (see Theorem 19 in
\cite{ZHU.RMJM}).      A variant of this  corollary is obtain in \cite{ZHU.JLMS}  (see also  Theorem 1 there for
 analytic functions).
\end{remark}

As a   special   case of the above corollary, we have the following one (certainly very well known for  analytic
Bloch  functions in the unit disc).

\begin{corollary}
A continuously differentiable  mapping  $f:\mathbf{B}^m\rightarrow \mathbf{R}^n$ is a Bloch mapping (i.e., $f\in
\mathcal{B}$) if and only if it is a Lipschitz mapping with respect to the  Euclidean and hyperbolic distance in
$\mathbf{R}^n$  and $\mathbf{B}^m$.  In other  words,   for the mapping  $f$,    there holds
\begin{equation*}
|f(\zeta) - f(\eta)| \le C \rho(\zeta,\eta)
\end{equation*}
for a constant $C$, if and only if $f\in \mathcal{B}$.                  Moreover,   the optimal  constant $C$ is
\begin{equation*}
C=\sup\{(1-|\zeta|^2)\| Df(\zeta)\|: \zeta\in \mathbf{B}^m\}
\end{equation*}
(for a given $f\in \mathcal{B}$)
\end{corollary}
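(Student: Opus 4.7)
The plan is to derive this corollary as an immediate specialization of Corollary~\ref{CORO.D.RHO}. I would take $\Omega=\mathbf{B}^m$ and the weight $\mathbf{w}(\zeta)=1-|\zeta|^2$. As already noted in Section~2 of the paper, for this choice the $\mathbf{w}$-distance $d_\mathbf{w}(\zeta,\eta)$ is precisely the hyperbolic distance $\rho(\zeta,\eta)$ on the unit ball, so no further work is needed to identify the two metrics.

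With this substitution in Corollary~\ref{CORO.D.RHO}, one obtains
\begin{equation*}
\sup_{\zeta\in \mathbf{B}^m}(1-|\zeta|^2)\|Df(\zeta)\|\ \ =\ \sup_{\zeta,\,\eta\in\mathbf{B}^m,\,\zeta\ne\eta}\frac{|f(\zeta)-f(\eta)|}{\rho(\zeta,\eta)}.
\end{equation*}
The right-hand side is, by definition, the smallest constant $C$ for which the Lipschitz inequality $|f(\zeta)-f(\eta)|\le C\rho(\zeta,\eta)$ holds for every pair of distinct points $\zeta,\eta\in\mathbf{B}^m$; the left-hand side is the Bloch semi-norm of $f$.

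I would then read off the two desired conclusions. First, the finiteness of one side is equivalent to the finiteness of the other, so $f\in\mathcal{B}$ if and only if $f$ is Lipschitz between the Euclidean metric on $\mathbf{R}^n$ and the hyperbolic metric on $\mathbf{B}^m$. Second, the equality of the two suprema identifies the optimal Lipschitz constant $C$ with $\sup\{(1-|\zeta|^2)\|Df(\zeta)\|:\zeta\in\mathbf{B}^m\}$, which is exactly the claim about the optimal constant.

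There is no substantive obstacle here: the argument is entirely a matter of invoking Corollary~\ref{CORO.D.RHO} with the correct weight and recognizing $d_\mathbf{w}=\rho$. The only minor point worth mentioning in the write-up is that the corollary's supremum automatically yields the \emph{optimal} (i.e., smallest) constant $C$, so the statement about optimality requires no extra argument.
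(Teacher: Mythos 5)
Your proposal is correct and follows exactly the route the paper intends: the paper presents this corollary as an immediate special case of Corollary~\ref{CORO.D.RHO} with $\mathbf{w}(\zeta)=1-|\zeta|^2$ on $\mathbf{B}^m$, where $d_\mathbf{w}=\rho$. Your observation that the supremum formula automatically identifies the optimal constant is the right way to read off the last claim.
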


\begin{remark}
The result of the last   corollary is  proved for  harmonic mappings of the unit disc into itself by  Colonna in
\cite{COLONNA.IUMJ}, where it is also found  that the constant $C$ is always  less or equal to $ 4/\pi$ for such
type of mappings.
\end{remark}

\subsection*{Acknowledgments}      I am thankful to the  referee for providing constructive comments and help in
improving the quality of this paper.

\end{document}